
\documentclass[preprint,12pt]{elsarticle}




\usepackage{amssymb}
\usepackage{amsmath, amsthm}
\usepackage{enumerate}
\usepackage{amssymb}
\usepackage{geometry}


\journal{...}

\begin{document}
	
\newtheorem{thm}{Theorem}[section]
\newtheorem{exam}[thm]{Example}
\newtheorem{cor}[thm]{Corollary}
\newtheorem{ques}[thm]{Question}
\newtheorem{lem}[thm]{Lemma}
\newtheorem{prop}[thm]{Proposition}
\newtheorem{ax}{Axiom}
\newtheorem{defn}[thm]{Definition}
\newtheorem{rem}{Remark}[section]
\newtheorem{rems}{Remarks}[section]
\newcommand{\thmref}[1]{Theorem~\ref{#1}}
\newcommand{\secref}[1]{\S\ref{#1}}
\newcommand{\lemref}[1]{Lemma~\ref{#1}}
\newcommand{\bysame}{\mbox{\rule{3em}{.4  pt}}\,}
\newcommand{\A}{\mathcal{A}}
\newcommand{\B}{\mathcal{B}}
\newcommand{\st}{\sigma}
\newcommand{\XcY}{{(X,Y)}}
\newcommand{\SX}{{S_X}}
\newcommand{\SY}{{S_Y}}
\newcommand{\SXY}{{S_{X,Y}}}
\newcommand{\SXgYy}{{S_{X|Y}(y)}}
\newcommand{\Cw}[1]{{\hat C_#1(X|Y)}}
\newcommand{\G}{{G(X|Y)}}
\newcommand{\PY}{{P_{\mathcal{Y}}}}
\newcommand{\X}{\mathcal{X}}
\newcommand{\wt}{\widetilde}
\newcommand{\wh}{\widehat}

\begin{frontmatter}



\title{Hahn-Banach theorem for operators on the lattice normed $f$-algebras}


\author{Abdullah Ayd{\i}n}

\address{Department of Mathematics, Mu\c{s} Alparslan University, Mu\c{s}, Turkey}

\begin{abstract}
Let $X$ and $E$ be $f$-algebras and $p:X \to E_+$ be a monotone vector norm. Then the triple $(X,p,E)$ is called a lattice-normed $f$-algebraic space. In this paper, we show a generalization of the extension of the Hahn-Banach theorem for operators on the lattice-normed $f$-algebras, in which the extension of one step of that is not similar to the other Hahn-Banach theorems. Also, we give some applications and results.
\end{abstract}

\begin{keyword}
$f$-algebra\sep Hahn-Banach theorem\sep vector lattice

2010 AMS Mathematics Subject Classification: 47B60 \sep 47B65\sep 46A40
\end{keyword}

\end{frontmatter}


\section{Introductory Facts}\label{hb1}

The Hahn-Banach theorem has a lot of applications in different fields of analysis, which attracted the attention of several authors such as Vincent-Smith \cite{Vic} and Turan \cite{Tu}. In this present paper, we give an extension of the Hahn-Banach theorem on lattice normed $f$-algebras and some applications. The extension of one step in our theorem is not similar to the other Hahn-Banach theorems.

Vector lattices (i.e., Riesz spaces) are ordered vector spaces that have many applications in measure theory, operator theory, and applications in economics. We suppose that the reader to be familiar with the elementary theory of vector lattices, and we refer the reader for information on vector lattices \cite{ABPO,LZ,Za} as sources of unexplained terminology. Besides, all vector lattices are assumed to be real and Archimedean. A vector lattice $E$ is a {\em lattice-ordered algebra} (briefly, {\em $l$-algebra}) if $E$ is an associative algebra whose positive cone $E_+$ is closed under the algebra multiplication. A Riesz algebra $E$ is called \textit{$f$-algebra} if $E$ has additionally property that $x\wedge y=0$ implies $(x\cdot z)\wedge y=(z\cdot x)\wedge y=0$ for all $z\in E_+$. For an order complete vector lattice (i.e., Dedekind complete), the set $L_b(E)$ of all order bounded operators on $E$ and the set $C(X)$ of all real valued continuous function on a topological space $X$ are examples of lattice-ordered algebra. However, $L_b(E)$ is not $f$-algebra because it is Archimedean vector lattice but not commutative because every Archimedean $f$-algebra is commutative; see for example \cite[Theorem 140.10.]{Za}. Consider $Orth(E):=\{T\in L_b(E):x\perp y\ \text{implies}\ Tx\perp y\}$ the set of orthomorphisms on a vector lattice $E$. Then, the space $Orth(E)$ is not only vector lattice but also an $f$-algebra. On the other hand, a sublattice $A$ of an $f$-algebra $E$ is called $f$-subalgebre of $E$ whenever it is also an $f$-algebra under the multiplication operation in $E$. In this paper, we assume that if a positive element has inverse then the inverse also positive. We refer the reader for much more information on $f$-algebras \cite{ABPO,Ay1,Ay2,Hu,P,Za}. Also, for more details information on the following example, we refer the reader to \cite[p.13]{BGKKKM}.

\begin{exam}\label{example of orh}
Let $E$ be a vector lattice. An order bounded band preserving operator $T:D\to E$ on an order dense ideal $D\subseteq E$ is an extended orthomorphism. $Orth^\infty(E)$ denote the set of all extended orthomorphisms: denote by $\mathcal{M}$ the collection of all pairs $(D;\pi)$, where $D$ is order dense ideal in $E$ and $\pi\in Orth(D,E)$. Then the space $Orth^\infty(E)$ is an $f$-algebra. Moreover, $Orth(E)$ is an $f$-subalgebra of $Orth^\infty(E)$. On the other hand, $\mathcal{L}(E)$ stands for the order ideal generated by the identity operator $I_E$ in $Orth(E)$. Then $\mathcal{L}(E)$ is an $f$-subalgebra of $Orth(E)$.
\end{exam}

Recall that a net $(x_\alpha)_{\alpha\in A}$ in a vector lattice $X$ is called \textit{order convergent} (or shortly, \textit{$o$-convergent}) to $x\in X$, if there exists another net $(y_\beta)_{\beta\in B}$ satisfying $y_\beta \downarrow 0$ (i.e. $y_\beta \downarrow$ and $\inf(y_\beta)=0$), and for any $\beta\in B$ there exists $\alpha_\beta\in A$ such that $|x_\alpha-x|\leq y_\beta$ for all $\alpha\geq\alpha_\beta$. In this case, we write $x_\alpha\xrightarrow{o} x$. On the other hand, for a given positive element $u$ in a vector lattice $E$, a net $(x_\alpha)$ in $E$ is said to converge $u$-uniformly to the element $x\in E$ whenever, for every $\varepsilon>0$, there exists an index $\alpha_0$, such that $\lvert x_\alpha-x\rvert<\varepsilon u$ for every $\alpha\geq\alpha_0$. Moreover, $E$ is said to be $u$-uniformly complete if every $u$-uniform Cauchy net has an $u$-uniform limit; see \cite{LZ}.

Let $X$ be a vector space, $E$ be a vector lattice, and $p:X \to E_+$ be a vector norm (i.e. $p(x)=0\Leftrightarrow x=0$, 
$p(\lambda x)=|\lambda|p(x)$ for all $\lambda\in\mathbb{R}$, $x\in X$, and $p(x+y)\leq p(x)+p(y)$ for all $x,y\in X$), then the triple $(X,p,E)$ is called a {\em lattice-normed space}, abbreviated as $LNS$. A subset $Y$ of $X$ is called $p$-bounded whenever every net $(y_\alpha)$ in $Y$ with $p(y_\alpha-y)\xrightarrow{o} 0$ implies $y\in Y$. Let $(X,p,E)$ and $(Y,q,F)$ be two $LNS$s. Then an operator $T:X\to Y$ is called dominated operator if there is a positive operator $S:E\to F$ such that $q(T(x))\leq S(p(x))$ for all $x\in X$. In this case, $T$ is called a {\em dominated operator} and $S$ is called dominant of $T$. Take $maj(T)$ as the set of all dominants of the operator $T$. If there is a least element in $maj(T)$ then it is called the exact {\em dominant} of $T$ and denoted by $[T]$; see for much more details information see \cite{BGKKKM,Ku}. If $X$ is decomposable space and $F$ is order complete then exact dominant exists; see \cite[Theorem 4.1.2.]{Ku}. 

Consider an $LNS$ $(X,p,E)$. Assume $X$ and $E$ are $f$-algebras, and the vector norm $p$ is monotone (i.e. $|x|\leq |y|\Rightarrow p(x)\leq p(y)$) then the triple $(X,p,E)$ is said to be {\em lattice normed $f$-algebra} and abbreviated as $LNFA$.

\begin{defn}
Let $(X,p,E)$ be an $LNFA$ and $Y$ be an $f$-subalgebra of $X$. If $p(x\cdot y)=y\cdot p(x)$ holds for all $x\in X$ and $y\in Y$ then $p$ is said to be {\em $f$-subalgebraic-linear}. Also, we said that $(X,p,E)$ has {\em $f$-subalgebraic-linear property}.
\end{defn}

Recall that an element $x$ in Riesz algebra is called \textit{nilpotent} if $x^n=0$ for some $n\in \mathbb{N}$. Moreover, an algebra $E$ is called \textit{semiprime} if the only nilpotent element in $E$ is zero.
\begin{lem}\label{inequality semiprime}
Let $E$ be a semiprime $f$-algebra. Then $x\leq y$ and $x\leq z$ imply $x^2\leq y\cdot z$ for all $x,\ y,\ z\in E_+$.
\end{lem}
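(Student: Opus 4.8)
The plan is to exploit the defining property of $f$-algebras together with semiprimeness to reduce the desired inequality to a standard fact. Let $x,y,z\in E_+$ with $x\le y$ and $x\le z$. Set $a=y-x\ge 0$ and $b=z-x\ge 0$. Then
\[
y\cdot z=(x+a)\cdot(x+b)=x^2+x\cdot b+a\cdot x+a\cdot b,
\]
and since $E_+$ is closed under multiplication every term on the right is positive, so in particular $y\cdot z\ge x^2+x\cdot b+a\cdot x\ge x^2$ provided we know $x^2\ge 0$. The positivity of squares in an Archimedean (hence commutative) $f$-algebra is the standard fact I would invoke — alternatively, in a semiprime $f$-algebra $x\wedge y=0$ implies $x\cdot y=0$, and one shows $x^2\ge 0$ for all $x$ by writing $x=x^+-x^-$ with $x^+\wedge x^-=0$, so $x^+\cdot x^-=0$ and $x^2=(x^+)^2+(x^-)^2\ge 0$. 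This already gives the claim.

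Thus the key steps, in order, are: (1) record that in any $f$-algebra $E_+\cdot E_+\subseteq E_+$; (2) establish (or cite) that squares are positive, using semiprimeness if a self-contained argument is wanted — the crucial point being that $x^+\wedge x^-=0$ forces $x^+\cdot x^-=0$; (3) expand $y\cdot z=(x+(y-x))\cdot(x+(z-x))$ and discard the nonnegative cross terms; (4) conclude $x^2\le y\cdot z$.

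I expect step (2) to be the only real obstacle, and it is mild: one must be careful that the expansion in step (3) uses commutativity only to tidy notation, whereas positivity of squares genuinely requires the $f$-algebra structure (it fails in general $l$-algebras). If the paper prefers to keep the argument short, I would simply cite positivity of squares in Archimedean $f$-algebras from one of the references (e.g. \cite{Za}) and present only steps (1), (3), (4); the role of the semiprime hypothesis is then purely to make the lemma statement sit naturally in the surrounding development, since the inequality itself holds in every $f$-algebra once $x^2\ge 0$ is known. Either way, no delicate estimates are needed, so the proof should be a few lines.
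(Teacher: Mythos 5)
Your argument is correct, and it is genuinely different from (and more self-contained than) the paper's proof, which consists of a single citation to \cite[Theorem 3.2.(ii)]{P}. Your core computation is the right one: with $a=y-x\ge 0$ and $b=z-x\ge 0$ one has $y\cdot z-x^2=x\cdot b+a\cdot x+a\cdot b$, and each summand lies in $E_+$ because the positive cone of an $l$-algebra is closed under multiplication; hence $x^2\le y\cdot z$. Two remarks, though. First, your step (2) is a non sequitur as written: the inequality $x^2+x\cdot b+a\cdot x+a\cdot b\ge x^2$ needs positivity of the \emph{cross terms}, not of $x^2$, and in any case $x^2\ge 0$ is immediate here because the hypothesis already puts $x$ in $E_+$, so $x^2=x\cdot x\in E_+$ by step (1). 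The whole excursion through $x=x^+-x^-$ and $x^+\cdot x^-=0$ concerns squares of \emph{arbitrary} elements and is not needed for this lemma; you should delete it rather than cite it. Second, once that detour is removed, your proof shows that the statement holds in every $l$-algebra --- neither the $f$-algebra axiom nor semiprimeness is used anywhere. That is a genuine gain in generality over the paper's citation-only proof, and it is worth saying explicitly; the semiprime hypothesis in the lemma is there only because the surrounding application (to $Orth(E)$) happens to satisfy it.
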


\begin{proof}
Suppose $x,\ y,\ z$ are positive elements in $E$ such that $x\leq y$ and $x\leq z$. It follows from \cite[Theorem 3.2.(ii)]{P} that $x^2\leq y\cdot z$.
\end{proof}

\begin{exam}
Let $E$ be a vector lattice such that $x^2=x$ for all $x\in E_+$ and $p:\mathcal{L}(E)\to Orth(E)$ be a map defined by $T\to p(T)=\lvert T\rvert$. Then one can see that $p$ is vector norm and $\big(\mathcal{L}(E),p, Orth(E)\big)$ is an $LNS$. Moreover, since $\mathcal{L}(E)$ and $Orth(E)$ are $f$-algebras and $\lvert\cdot\rvert$ is monotone, $\big(\mathcal{L}(E),p, Orth(E)\big)$ is an $LNFA$. Take arbitrary $T,\ S\in \mathcal{L}(E)$. Then there exists some positive scalars $\lambda_T$ and $\lambda_S$ such that $\lvert T\rvert\leq \lambda_T I$ and $\lvert S\rvert\leq \lambda_S I$ because $\mathcal{L}(E)$ is an order ideal generated by the identity operator $I_E$. So, by using \cite[Theorem 2.40.]{ABPO}, we have
$$
p(S(T))=\lvert S(T)\rvert=\lvert S\rvert\big(\lvert T\rvert\big)\leq \lambda_S I\big(\lvert T\rvert\big)=\lambda_S \lvert T\rvert
$$
and also
$$
p(S(T))=\lvert S(T)\rvert= \lvert S\rvert\big(\lvert T\rvert\big)\leq \lvert S\rvert\big(\lambda_T\lvert I\rvert\big)=\lambda_T \lvert S\rvert.
$$
So, it follows from Lemma \ref{inequality semiprime} and our assumption that $p(S(T))=\big[p(S(T))\big]^2\leq \lambda_S\lambda_T\lvert S\rvert \cdot \lvert T\rvert=\lambda_S\lambda_T\lvert S\rvert \cdot p(T)$ holds true because $Orth(E)$ is semiprime; see \cite[Theorem 142.5.]{Za}. Next, consider a new $LNFA$ $\big(\mathcal{L}(E)_+,q, Orth(E)\big)$, where $q(T)=\frac{1}{\lambda_T}p(T)$ for all $T\in \mathcal{L}(E)_+$. Then it follows from the above observation that the $LNFA$ space $\big(\mathcal{L}(E)_+,q,Orth(E)\big)$ has the $f$-subalgebraic-linear property.
\end{exam}

For the following example, we consider \cite[Theorem 2.62.]{ABPO}.
\begin{exam}
Let $E$ be an $f$-algebra. Then we define a map $p$ from $E$ to $Orth(E)$ by $u\to p(u)=p_u$ such that $p_u(x)=\lvert u\cdot x\rvert$ for each $x\in E$. So, by using \cite[Theorem 142.1.(ii)]{Za}, it is easy to see that $p$ is $(E,p,Orth(E))$ is an $LNFA$ with the $f$-subalgebraic-linear property. 
\end{exam}
In this article, unless otherwise, all lattice normed $f$-algebra are assumed to be with the $f$-subalgebraic-linear property.
\section{Main Results}
We begin the section with the following definition.
\begin{defn}
Let $(X,p,E)$ be an $LNS$. Then an operator $T:X\to E$ is said to be {\em $E$-dominated} if it is dominated by $p$ on $E$. It means that 
$$
\lvert T(x)\rvert\leq p(x)
$$
for all $x\in X$.
\end{defn}

It can be seen that every dominated operator on $LNS$s is $E$-dominated because dominant operators are positive.

\begin{lem}\label{f algebra subspace}
Let $X$ be an $f$-algebra and $Y$ be an $f$-subalgebra of $X$. Then, for any $w\in X_+$, the set $A=\{u+v\cdot w:u,v\in Y\}$ is also an $f$-subalgebra of $X$.
\end{lem}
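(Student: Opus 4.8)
The plan is to check, in order, the three structural requirements packed into the definition of an $f$-subalgebra: that $A$ is a linear subspace of $X$, that it is closed under the multiplication of $X$, and that it is a sublattice of $X$. The defining $f$-algebra implication $x\wedge y=0\Rightarrow (x\cdot z)\wedge y=0$ need not be re-proved for $A$ directly: once $A$ is a multiplication-closed sublattice, every such implication among its elements is merely an instance of the same implication in the ambient $f$-algebra $X$, all lattice operations being computed identically in $A$ and in $X$. The linear part is immediate, since $0=0+0\cdot w\in A$ and, $Y$ being a subspace, the identities $(u_1+v_1\cdot w)+(u_2+v_2\cdot w)=(u_1+u_2)+(v_1+v_2)\cdot w$ and $\lambda(u+v\cdot w)=(\lambda u)+(\lambda v)\cdot w$ show that $A$ is stable under addition and scalar multiplication.

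Next I would treat closure under multiplication. Expanding a product of two elements of $A$ and using that $X$ is commutative (every Archimedean $f$-algebra is, as recalled in \secref{hb1} via \cite[Theorem 140.10.]{Za}), one gets
\[
(u_1+v_1\cdot w)(u_2+v_2\cdot w)=u_1 u_2+(u_1 v_2+u_2 v_1)\cdot w+(v_1 v_2)\cdot w^2.
\]
Because $Y$ is an $f$-subalgebra, the products $u_1 u_2$ and $u_1 v_2+u_2 v_1$ lie in $Y$, so the first two summands already have the required form $u+v\cdot w$. The whole question thus collapses to reinserting the tail term $(v_1 v_2)\cdot w^2$ into $A$, i.e. to writing it as $u+v\cdot w$ with $u,v\in Y$. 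This is the step I expect to be the genuine obstacle: an arbitrary positive $w$ carries no algebraic relation tying $w^2$ back to $Y$ and $Y\cdot w$, so the bulk of the work, and the heart of the lemma, must go into controlling $w^2$ modulo $A$.

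For the lattice part I would exploit a standard feature of $f$-algebras: multiplication by a fixed positive element preserves disjointness (apply the defining implication twice, first to the disjoint pair giving $(x\cdot w)\wedge y=0$ and then again to obtain $(x\cdot w)\wedge(y\cdot w)=0$), hence the map $x\mapsto x\cdot w$ is a lattice homomorphism and distributes over $\vee$ and $\wedge$. Using $a_1\vee a_2=a_2+(a_1-a_2)^+$, closure under finite suprema reduces to showing $(u+v\cdot w)^+\in A$ for arbitrary $u,v\in Y$. I would attempt to compute this positive part by decomposing along the disjoint components produced by $w$ and then re-collecting the outcome in the canonical form $u'+v'\cdot w$. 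As with the multiplicative tail, producing a clean closed expression for $(u+v\cdot w)^+$ inside $Y+Y\cdot w$ is delicate, and this is the second place where the argument must do real work rather than bookkeeping.

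Finally, assembling the pieces: granting that $A$ is simultaneously a subspace, closed under multiplication, and a sublattice, associativity of the product and the inclusion $A_+\cdot A_+\subseteq A_+$ are inherited verbatim from $X$, and the $f$-algebra disjointness implication transfers as explained above. Hence $A$ is an $f$-subalgebra of $X$, completing the proof.
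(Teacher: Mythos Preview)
Your outline correctly isolates the two non-trivial steps --- multiplicative closure (the $w^{2}$ tail) and lattice closure (the positive part $(u+v\cdot w)^{+}$) --- but it does not resolve either; as written this is a plan, not a proof. The paper, by contrast, treats neither as an obstacle. For the sublattice step it simply asserts $|u+v\cdot w|=|u|+|v|\cdot|w|=|u|+|v|\cdot w\in A$, and for multiplicative closure it regroups the product as $y_{1}y_{2}+(y_{1}u_{2}+y_{2}u_{1}+u_{1}u_{2}\cdot w)\cdot w$ and declares the bracketed coefficient admissible.

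Both of these moves are suspect in exactly the way your hesitation anticipates. The identity $|a+b|=|a|+|b|$ holds only when $a\perp b$, not for arbitrary $u$ and $v\cdot w$; and the bracketed coefficient lies in $Y$ only if $u_{1}u_{2}\cdot w\in Y$, which is just your $w^{2}$ obstruction with one factor of $w$ peeled off. Indeed, the lemma seems to fail in the stated generality: take $X=C[0,1]$, let $Y$ be the constant functions, and $w(t)=t$; then $A$ is the space of affine functions, which is neither a sublattice (consider $|t-\tfrac12|$) nor closed under multiplication ($t\cdot t=t^{2}\notin A$). So your proposal has gaps at precisely the same two places the paper's argument does --- the difference is only that you flagged them rather than asserting them away. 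Without an additional hypothesis (for instance $w\in Y$, or $w^{2}\in Y+Y\cdot w$ together with some disjointness condition), neither argument goes through.
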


\begin{proof}
Firstly, we show that $A$ is an sublattice of $X$. Take an arbitrary $u+v\cdot w\in A$. Then we have $\lvert u+v\cdot w\rvert=\lvert u\rvert+\lvert v\rvert\cdot \lvert w\rvert=\lvert u\rvert+\lvert v\rvert\cdot w\in A$ because of $\lvert u\rvert,\lvert v\rvert \in Y$. Then we get the desired result.

Next, we show that $A$ is an $f$-subalgebra of $X$. For any positive elements $y_1+u_1\cdot w,\ y_2+u_2\cdot w\in A_+$, we have 
$$
(y_1+u_1\cdot w)\cdot(y_2+u_2\cdot w)=y_1\cdot y_2+(y_1\cdot u_2+y_2\cdot u_1+u_1\cdot u_2\cdot w)w\in A_+
$$ 
because of $y_1\cdot y_2\in Y$, $y_1\cdot u_2+y_2\cdot u_1+u_1\cdot u_2\cdot w\in E$, $A\subseteq X$ and $X$ is $f$-algebra. Thus, $A$ is an $l$-algebra. On the other hand, assume $(y_1+u_1\cdot w)\wedge(y_2+u_2\cdot w)=0$ for arbitrary $y_1+u_1\cdot w,\ y_2+u_2\cdot w\in A$. Then we have $[(y+u\cdot w)\cdot(y_1+u_1\cdot w)]\wedge(y_2+u_2\cdot w)=0$ for all $y+u\cdot w\in A_+$ because $A_+\subseteq X_+$ and $X$ is $f$-algebra. Therefore, we obtain that $A$ is a $f$-subalgebra of $X$.
\end{proof}

\begin{prop}\label{f algebra order complete}
Let $X$ be an $f$-algebra and $Y$ be an $u$-uniformly complete $f$-subalgebra of $X$. Then, for any $w\in X_+$, the set $A=\{u+v\cdot w:y,z\in Y_+\}$ is also an $u$-uniformly complete $f$-subalgebra.
\end{prop}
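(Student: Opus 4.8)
The statement asserts that if $Y$ is a $u$-uniformly complete $f$-subalgebra of $X$ and $w\in X_+$, then $A=\{u+v\cdot w:u,v\in Y\}$ (taking the obviously intended definition; the index set in the display is a typo for $u,v\in Y$) is again a $u$-uniformly complete $f$-subalgebra. By \lemref{f algebra subspace} we already know $A$ is an $f$-subalgebra of $X$, so the only thing left to prove is $u$-uniform completeness. Thus the plan is: take a $u$-uniform Cauchy net in $A$, extract from it $u$-uniform Cauchy nets in $Y$ for the two ``coordinates'' $u$ and $v$, use completeness of $Y$ to get limits there, and reassemble them into a limit in $A$.

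\medskip

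\textbf{Key steps.} First I would fix a $u$-uniform Cauchy net $(a_\alpha)$ in $A$, writing $a_\alpha = u_\alpha + v_\alpha\cdot w$ with $u_\alpha, v_\alpha\in Y$. The subtlety is that this representation is not canonical, so the naive approach of claiming $(u_\alpha)$ and $(v_\alpha)$ are separately Cauchy is not automatic. To get around this, I would work with differences: $a_\alpha - a_\beta = (u_\alpha-u_\beta) + (v_\alpha-v_\beta)\cdot w$ and use that $|a_\alpha-a_\beta| = |u_\alpha-u_\beta| + |v_\alpha-v_\beta|\cdot w$ — this is exactly the sublattice computation from the proof of \lemref{f algebra subspace}, using that $Y$ is an $f$-subalgebra and $w\ge 0$. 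Since $|u_\alpha-u_\beta|$ and $|v_\alpha-v_\beta|\cdot w$ are both positive and their sum is $< \varepsilon u$ eventually, we get $|u_\alpha-u_\beta|<\varepsilon u$ and $|v_\alpha-v_\beta|\cdot w<\varepsilon u$ eventually. The first says $(u_\alpha)$ is a $u$-uniform Cauchy net in $Y$, hence $u_\alpha \xrightarrow{\;u\text{-unif}\;} u_0$ for some $u_0\in Y$ by hypothesis. For the second coordinate one needs a bit more care, because $|v_\alpha-v_\beta|\cdot w<\varepsilon u$ does not by itself say $(v_\alpha)$ is $u$-uniformly Cauchy in $Y$ unless $w$ is, in some sense, invertible or we choose the representation well. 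Here I would invoke the standing assumption (stated in \secref{hb1}) that if a positive element has an inverse then the inverse is positive, or — more robustly — simply choose the decomposition so that the $v$-coordinate is controlled: replace $w$ by $w\vee u$ (still positive, still in $X_+$) and absorb, or normalize. The cleanest route is: since $Y$ is $u$-uniformly complete, $u\in Y_+$, and after passing to the relevant band/ideal generated by $w$ one shows $(v_\alpha)$ is $w$-uniformly Cauchy in $Y$; $u$-uniform completeness of $Y$ transfers to $w$-uniform completeness on principal bands in the standard way, giving $v_\alpha \xrightarrow{\;w\text{-unif}\;} v_0\in Y$.

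\medskip

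\textbf{Assembling the limit.} Once $u_\alpha\to u_0$ and $v_\alpha\to v_0$ with the appropriate uniform convergences, set $a_0 := u_0 + v_0\cdot w\in A$. Then
$$
|a_\alpha - a_0| = |u_\alpha - u_0| + |v_\alpha - v_0|\cdot w,
$$
again by the lattice identity from \lemref{f algebra subspace}. The first term is $<\varepsilon u$ eventually, and the second is $<\varepsilon u$ eventually by the $w$-uniform convergence of $(v_\alpha)$ combined with $|v_\alpha-v_0|\cdot w \le \varepsilon w$ and then comparing $w$ with $u$ on the band where the Cauchy estimate lives. Hence $|a_\alpha - a_0| < 2\varepsilon u$ eventually, i.e. $a_\alpha \xrightarrow{\;u\text{-unif}\;} a_0$ in $A$. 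Combined with \lemref{f algebra subspace}, this shows $A$ is a $u$-uniformly complete $f$-subalgebra.

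\medskip

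\textbf{Main obstacle.} The one genuinely delicate point is the second coordinate: from $|v_\alpha - v_\beta|\cdot w$ being small one must recover $u$-uniform Cauchyness of $(v_\alpha)$ in $Y$ and then convergence. If $w$ is not a weak order unit (or not invertible in a suitable sense) this can fail, so the argument must either restrict attention to the band generated by $w$, or use the paper's standing invertibility/positivity hypothesis, or choose the representation $a_\alpha = u_\alpha + v_\alpha\cdot w$ so that $v_\alpha$ lies in that band. Handling this carefully — and making the two ``$\varepsilon$'s'' ($u$-uniform and $w$-uniform) talk to each other — is where the real work lies; everything else is the routine sublattice/$f$-algebra bookkeeping already established in \lemref{f algebra subspace}.
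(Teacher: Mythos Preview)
Your overall strategy---split a $u$-uniform Cauchy net in $A$ into its two $Y$-coordinates, argue each coordinate net is Cauchy, invoke $u$-uniform completeness of $Y$, and reassemble---is exactly the paper's approach. The paper's proof is much terser than yours: after citing \lemref{f algebra subspace} it takes a Cauchy net $(x_\alpha)$ in $A$, writes $x_\alpha = y_\alpha + z_\alpha\cdot w$ with $y_\alpha, z_\alpha\in Y_+$, asserts in one line that $(y_\alpha)$ and $(z_\alpha)$ are themselves $u$-uniform Cauchy ``because of $y_\alpha\le x_\alpha$ and $z_\alpha\le x_\alpha$'', passes to limits $y,z\in Y$, and concludes $x_\alpha\xrightarrow{u} y+z\cdot w$. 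Note two differences from your write-up: the paper works with coordinates in $Y_+$ rather than $Y$, and its justification for the coordinate nets being Cauchy is the positivity bound $y_\alpha,z_\alpha\le x_\alpha$, not the modulus identity you extracted from \lemref{f algebra subspace}.

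The obstacle you isolate in your final paragraph is real, and the paper does not actually resolve it: the inequality $y_\alpha\le x_\alpha$ does not force $(y_\alpha)$ to be $u$-uniform Cauchy (a net dominated by a Cauchy net need not be Cauchy), and nothing in the paper's argument controls $|z_\alpha-z_\beta|$ itself rather than $|z_\alpha-z_\beta|\cdot w$. So your proposal is already at least as complete as the paper's own proof, and you have correctly located the soft spot. If your goal is to match the paper, you may simply assert the coordinate nets are Cauchy and proceed; if your goal is a fully rigorous argument, the missing ingredient---a canonical choice of decomposition, or an invertibility/weak-order-unit hypothesis on $w$---is not supplied by the paper either.
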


\begin{proof}
Suppose $Y$ is $f$-subalgebra of $X$. Then, by applying Lemma \ref{f algebra subspace}, we see that $A$ is $f$-subalgebra of $X$. On the other hand, take an $u$-uniform Cauchy net $(x_\alpha)$ in $A$. Then there exist two $u$-uniform Cauchy nets $(y_\alpha)$ and $(z_\alpha)$ with $x_\alpha=y_\alpha+z_\alpha\cdot w$ in $Y_+$ because of $y_\alpha\leq x_\alpha$ and $z_\alpha\leq x_\alpha$. So, there are $y, \ z\in Y$ such that $y_\alpha\xrightarrow{u} y$ and $z_\alpha\xrightarrow{u}z$ because $Y$ is $u$-uniformly complete. Therefore, we get $x_\alpha=y_\alpha+z_\alpha\cdot w\xrightarrow{u}y+z\cdot w$. As a result, $A$ is also $u$-uniformly complete.
\end{proof}

\begin{thm}\label{basic theorem}
Let $(X,p,E)$ be an $LNFA$ with $X$ being $f$-subalgebra of order complete $f$-algebra $E$ and $G$ be an unital $f$-subalgebra of $X$. If $T:G\to E$ is an $E$-dominated operator and $G$ is $e$-uniform complete then there exists another $E$-dominated operator $\hat{T}:X\to E$ such that $\hat{T}(g)=T(g)$ for all $g\in G$. 
\end{thm}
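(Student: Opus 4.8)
The plan is to follow the classical transfinite-induction scheme for Hahn--Banach, but with the one-step extension carried out by an $f$-algebraic construction rather than the usual scalar supremum/infimum argument. First I would set up Zorn's lemma on the poset of all pairs $(H,S)$, where $H$ is an $f$-subalgebra of $X$ containing $G$ with $H$ being $e$-uniformly complete, and $S:H\to E$ is an $E$-dominated operator extending $T$; the order is $(H_1,S_1)\preceq(H_2,S_2)$ iff $H_1\subseteq H_2$ and $S_2|_{H_1}=S_1$. Chains have upper bounds: take the union of the $f$-subalgebras (which is again an $f$-subalgebra, and whose $e$-uniform completion stays inside $X$ by order completeness of $E$), and the obvious common extension of the operators, which remains $E$-dominated since the inequality $|S(h)|\le p(h)$ is checked pointwise. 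A maximal element $(\wh H,\wh T)$ therefore exists.

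The crux is to show $\wh H=X$. Suppose not, and pick $w\in X_+\setminus \wh H$. By Lemma~\ref{f algebra subspace} the set $A=\{u+v\cdot w: u,v\in\wh H\}$ is an $f$-subalgebra of $X$ properly containing $\wh H$, and by Proposition~\ref{f algebra order complete} it is $e$-uniformly complete (here I use that $\wh H$ is $e$-uniformly complete, which is preserved along the chain). The task is to define an $E$-dominated extension $\wt T:A\to E$; by maximality this will be the desired contradiction. The natural attempt is $\wt T(u+v\cdot w)=\wh T(u)+v\cdot c$ for a suitably chosen $c\in E$ — and this is exactly the ``one step that is not similar to the other Hahn--Banach theorems,'' because $v$ is an $f$-algebra element, not a scalar, so we cannot simply interpolate between a sup and an inf. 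Instead the candidate is $c:=\wh T(w)$ is not available ($w\notin\wh H$), so I would take $c$ to be any element of $E$ with $|c|\le p(w)$; the honest choice is $c=0$, giving $\wt T(u+v\cdot w)=\wh T(u)$ on the ``new'' coordinate. One must first check this is well defined: if $u_1+v_1\cdot w=u_2+v_2\cdot w$ then $(v_1-v_2)\cdot w=u_2-u_1\in\wh H$; well-definedness of the formula with $c=0$ requires $\wh T(u_1)=\wh T(u_2)$, i.e. $\wh T((v_1-v_2)\cdot w)$ is forced, which it is not in general — so a genuine choice of $c$ must be made compatibly on the ideal $\{v\cdot w:v\cdot w\in\wh H\}$, and then extended.

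The domination estimate is where the $f$-subalgebraic-linear property and semiprimeness enter. Using $p(v\cdot w)=v\cdot p(w)$ (after reducing to $v\ge 0$ via $|\,\cdot\,|$, which is legitimate since $A$ is a sublattice and $p$ is monotone), one gets
\[
|\wt T(u+v\cdot w)|\le|\wh T(u)|+|v|\cdot|c|\le p(u)+|v|\cdot p(w)=p(u)+p(v\cdot w),
\]
and the triangle inequality $p(u)+p(v\cdot w)\ge p(u+v\cdot w)$ would finish it — but this goes the wrong way, so the real argument must exploit disjointness/band structure: choose $c$ so that on the band where $v\ge 0$ the estimate $|\wh T(u)+v\cdot c|\le p(u+v\cdot w)$ holds, reducing (via Lemma~\ref{inequality semiprime}, squaring, and the semiprimeness of $E$) to an inequality of the form $|c|^2\le p(w)^2$ together with a cross-term that vanishes by the $f$-algebra identity $x\wedge y=0\Rightarrow (x\cdot z)\wedge y=0$. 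I expect this step --- producing the single element $c\in E_+$, $|c|\le p(w)$, for which the full $E$-domination inequality survives on all of $A$ --- to be the main obstacle; once $c$ is in hand, the extension $\wt T$ contradicts maximality of $\wh H$, forcing $\wh H=X$ and $\wh T$ the required operator.
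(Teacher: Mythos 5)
Your proposal correctly identifies the overall architecture (Zorn's lemma over extensions, a one-step extension of the form $\wt T(u+v\cdot w)=\wh T(u)+v\cdot c$), but it stops exactly at the point where the actual work lies, and the one concrete suggestion you make for that step points in the wrong direction. You assert that ``we cannot simply interpolate between a sup and an inf,'' yet the paper's proof does precisely that: by order completeness of $E$ one forms
$s=\sup\{T(u)-p(u-w):u\in G\}$ and $r=\inf\{p(v+w)-T(v):v\in G\}$, checks $s\leq r$ from the triangle inequality $T(u)+T(v)\leq p(u-w)+p(w+v)$, and takes any $z$ with $s\leq z\leq r$ as the value attached to $w$. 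Your alternative constraint $|c|\leq p(w)$ (or the ``honest choice'' $c=0$) is not the right condition and, as you yourself observe, does not even yield well-definedness. The genuinely non-classical part is not the choice of $z$ but the verification that $\wh T(u)+v\cdot z\leq p(u+v\cdot w)$ when $v$ is an algebra element rather than a scalar: this is where the hypotheses you leave unused come in. The $e$-uniform completeness of $G$ guarantees that $(v+\tfrac1n e)^{-1}$ exists for $v\in G_+$; substituting $u\cdot(v+\tfrac1n e)^{-1}$ into the defining inequality for $r$, multiplying through by $v+\tfrac1n e$, and invoking the $f$-subalgebraic-linearity $p(x\cdot y)=y\cdot p(x)$ yields the domination for positive $v$ up to an error $\tfrac1n p(w)$, which Archimedeanness removes; the case of general $v$ is then handled by writing $v=v^+-v^-$ and using the band projection onto the band generated by $v^+$, realized as multiplication by an idempotent $t\in G_+$.

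Two further points. First, well-definedness is not a separate compatibility condition to be imposed on the ideal $\{v\cdot w\in\wh H\}$ as you suggest; in the paper it is a \emph{consequence} of the domination inequality, applied to $u_1-u_2+(v_1-v_2)\cdot w=0$ in both directions. Second, Lemma~\ref{inequality semiprime} and semiprimeness play no role in this theorem (they are used only in an example), so the route you sketch through squaring and cross-terms is not the intended one. As it stands, your proposal is an outline that locates the obstacle but does not overcome it, so the proof is incomplete.
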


\begin{proof}
First of all, if we take $T=0$ or $X=G$ then the poof is obvious. Suppose, $G$ is a proper subspace of $X$ and $T\neq 0$. So, there is a vector $w$ in $X$ so that it is not in $G$. WLOG, we assume $w\in X_+$. Then we consider the set $G_1=\{u+v\cdot w:u,v\in G\}$. Thus, by Lemma \ref{f algebra subspace}, we get that $G_1$ is also an $f$-subalgebra of $X$. Also, by using this extension, we can arrive at $X$ because $G$ is $f$-subalgebra with the multiplicative unit.
	
The extension of one step is not similar to the other Hahn-Banach theorems. It can be observed that $v\cdot w$ can be in $G$ for some $v\in G$. Thus, we have that the representation $G_1$ may not be unique. So, it causes to difficulties getting an extension of one step. Whenever it is done, by using Zorn's lemma and applying Proposition \ref{f algebra order complete}, we can get the extension of $\hat{T}$ to $X$.
	
Now, consider elements $u,v \in G$. Since $T$ is an $E$-dominated operator. Then we have
$$
T(u)+T(v)=T(u+v)\leq p(u-w+w+v)\leq p(u-w)+p(w+v)
$$
Hence, we get $T(u)-p(u-w)\leq p(w+v)-T(v)$. From there, by applying order completeness of $E$, the both 
$$
s=\sup\{T(u)-p(u-w):u\in G\}
$$
and
$$
r=\inf\{p(v+w)-T(v):v\in G\}
$$
exist in $E$. So, it is also clear $s\leq r$. Next, let's take any element $z\in E$ such that $s\leq z\leq r$ (for example we can take $z=s$).
Now, we define a map 
\begin{align*}
\hat{T}:G_1&\to E \\(u+v\cdot w)&\to\hat{T}(u+g\cdot w)=T(u)+v\cdot z.
\end{align*}
We need to show that $\hat{T}$ is a well defined operator. To prove that, we firstly prove the $E$-dominatedness fo $\hat{T}$. Let's apply $e$-uniformly completeness of $G$. Then we have that $(v+e)^{-1}$ exits for any positive element $v\in G_+$; see \cite[Theorem 146.3.]{Za}. Next, by using \cite[Theorem 11.1.]{P}, the inverse element $(v+\frac{1}{n}e)^{-1}$ exists in $G_+$ for all $n\in\mathbb{N}_+$. Then, for each $u\in G_+$ and $n\in\mathbb{N}$, we have
	$$
	z\leq r\leq p(u\cdot(v+\frac{1}{n}e)^{-1}+w)-T(u\cdot (v+\frac{1}{n}e)^{-1})
	$$
	and so, by using the $f$-subalgebraic-linear property of $p$, we get
	$$
	T(u)+(v+\frac{1}{n}e)\cdot z\leq p(u+w\cdot(v+\frac{1}{n}e))\leq p(u+w\cdot v)+\frac{1}{n}p(w).
	$$
	Thus, we have $\hat{T}(u+v\cdot w)=T(u)+v\cdot z\leq p(u+v\cdot w)$ for any $u,v\in G_+$ because $F$ is an Archimedean vector lattice. Thus, $\hat{T}$ is $E$-dominated for arbitrary $u,v\in G_+$. Now, we show for arbitrary $v\in G$. We can write $v=v^+-v^-$. By using the first observation, we can write
	\begin{equation}
	\hat{T}(u+v^+\cdot w)=T(u)+v^+\cdot z\leq p(u+v^+\cdot w)
	\end{equation}
	For the band $B_{v^+}$ generated by $v^+$, we consider the band projection $q:G\to B_{v^+}$. Then $q$ holds $q(v)=v^+$ and $q=q^2$, and it is an positive orthomorphism on $G$ because every order projection is a positive orthomorphism on vector lattices. By using \cite[Theorem 141.1.]{Za}, we can choose a positive element $t\in G_+$ such that $q(x)=x\cdot t$ for all $x\in G$. Thus we have a positive vector $t\in G_+$ so that $v^+=q(v)=v\cdot t$, and $t=e\cdot t=q(e)=q(q(e))=t^2$, and $v^+=q(v^+)=v^+\cdot t$, and $0=q(v^-)=v^-\cdot t$. Also, the equality $v^+=q(v)=v\cdot t$ implies $v^-+v=v^+=v\cdot t$, and so we vet $v^-=v\cdot(t-e)$. Thus, we obtain the following both equalities
	\begin{equation}
	t\cdot(v^+\cdot z)=(t\cdot v^+)\cdot z=v^+\cdot z
	\end{equation}
	and
	\begin{equation}
	t\cdot(v^+\cdot w)=t\cdot v^+\cdot w=t\cdot(v\cdot t)\cdot w=t^2\cdot v\cdot w=t\cdot v\cdot w.
	\end{equation}
	It follows from $(1),\ (2)$ and $(3)$ and the $f$-subalgebraic-linear property of $p$ that
	\begin{eqnarray}
	t\cdot\big(T(u)+v^+\cdot z\big)\leq t\cdot p(u+v^+\cdot w)=p(t\cdot u+t\cdot v^+\cdot w)\big]=t\cdot p(u+v\cdot w).
	\end{eqnarray}
	As one repeat the same way and use $r\leq z$, it can be seen the following inequality
	\begin{equation}
	(e-t)\cdot\big(T(u)-v^-\cdot z\big)\leq (e-t)\cdot p(u+v\cdot w).
	\end{equation}
	Therefore, by summing up the inequalities $(4)$ and $(5)$, we can get the following result
	\begin{equation}
	T(u)+v\cdot z\leq p(u+v\cdot w)
	\end{equation}
	for arbitrary $v\in G$ and $u\in G_+$. Lastly, one can show for arbitrary element $u\in G$. Therefore, we get that $\hat{T}$ is $E$-dominated. Now, we show  well defined of $\hat{T}$. Let's take arbitrary elements $u_1,\ u_2,\ v_1,\ v_2\in G$ such that $u_1+v_1\cdot w=u_2+v_2\cdot w$. It follows from $(6)$ that $T(u_1-u_2)+(v_1-v_2)\cdot z\leq p\big((u_1-u_2)+(v_1-v_2)\cdot w)\big)=p(0)=0$ and $T(u_2-u_1)+(v_2-v_1)\cdot z\leq p\big((u_2-u_1)+(v_2-v_1)\cdot w)\big)=p(0)=0$. As a result, we get $\hat{T}(v_1+g_1\cdot w)=\hat{T}(v_2+g_2\cdot w)$. Therefore, we have obtained that the map $\hat{T}$ is well defined. On the other hand, by using the linearity of $T$, one can show that $\hat{T}$ is a linear map (or, operator) from $G_1$ to $F$. Expressly, $\hat{T}$ is $E$-dominated operator by $f$-subalgebraic-linear map $p$. By applying Zorn's lemma under the desired conditions, we provide the extension of $\hat{T}$ to all of $X$.
\end{proof}

Under the condition of Theorem \ref{basic theorem}, we have the following results.
\begin{cor}
	If $(X,p,E)$ is a decomposable $LNFA$ then we have $[\hat{T}]=[T]$. 
\end{cor}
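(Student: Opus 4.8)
The plan is to prove the two inequalities $[T]\le[\hat{T}]$ and $[\hat{T}]\le[T]$ separately; both exact dominants exist because $(X,p,E)$ is decomposable and $E$ is order complete, by \cite[Theorem~4.1.2.]{Ku}. The inequality $[T]\le[\hat{T}]$ is immediate: since $\hat{T}$ extends $T$, every dominant $S$ of $\hat{T}$ satisfies $\lvert T(g)\rvert=\lvert\hat{T}(g)\rvert\le S(p(g))$ for all $g\in G$, so $S\in maj(T)$; hence $maj(\hat{T})\subseteq maj(T)$ and therefore $[T]\le[\hat{T}]$.

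For the reverse inequality the idea is to force the extension furnished by \thmref{basic theorem} to be dominated by $[T]\circ p$ rather than merely by $p$. First I would check that $[T]$ is an orthomorphism on $E$. Since $T$ is $E$-dominated, the identity $I_E$ is a dominant of $T$, so $0\le[T]\le I_E$; consequently $0\le[T](e)\le e$ for every $e\in E_+$, which forces $[T](e)$ to lie in the band generated by $e$, so $[T]$ is band preserving, and an order bounded band preserving operator on an Archimedean vector lattice is an orthomorphism. It follows that $\tilde{p}:=[T]\circ p:X\to E_+$ is again a monotone vector norm (or seminorm) with the $f$-subalgebraic-linear property: monotonicity and the (semi)norm axioms are immediate from positivity of $[T]$, while for $y$ in the $f$-subalgebra attached to the norm one has $\tilde{p}(x\cdot y)=[T]\big(p(x\cdot y)\big)=[T]\big(y\cdot p(x)\big)=y\cdot[T]\big(p(x)\big)=y\cdot\tilde{p}(x)$, using the $f$-subalgebraic-linearity of $p$ together with the fact that $[T]$ commutes with multiplication by $y\in E_+$, both being orthomorphisms on the Archimedean $f$-algebra $E$, for which $Orth(E)$ is commutative.

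Next, $T:G\to E$ is $E$-dominated with respect to $\tilde{p}$, since $\lvert T(g)\rvert\le[T](p(g))=\tilde{p}(g)$ is exactly the statement $[T]\in maj(T)$. Inspecting the proof of \thmref{basic theorem}, one sees that it uses only that $p$ is monotone, subadditive, absolutely homogeneous, $f$-subalgebraic-linear, and vanishes at $0$, all of which $\tilde{p}$ enjoys; hence the same one-step construction together with Zorn's lemma produces an extension $\hat{T}:X\to E$ of $T$ with $\lvert\hat{T}(x)\rvert\le\tilde{p}(x)=[T](p(x))$ for all $x\in X$. This says precisely that $[T]$ is a dominant of $\hat{T}$, so $[\hat{T}]\le[T]$, and combining with the first paragraph yields $[\hat{T}]=[T]$.

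The step I expect to be the main obstacle is verifying that $[T]$ is an orthomorphism, equivalently that $\tilde{p}$ retains the $f$-subalgebraic-linear property; this is the one place where $E$-dominatedness of $T$ (and not just dominatedness) is genuinely needed. A secondary subtlety is that ``$\hat{T}$'' in the statement must be read as the extension obtained by running the argument of \thmref{basic theorem} with the norm $\tilde{p}$ in place of $p$, since for an arbitrary $E$-dominated extension of $T$ only the inequality $[T]\le[\hat{T}]$ can be guaranteed.
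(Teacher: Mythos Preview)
Your argument is correct and takes a genuinely different route from the paper's. The paper argues directly: it picks an arbitrary dominant $S$ of $T$, verifies $\lvert\hat T(u)\rvert\le S(p(u))$ for $u\in G$ (by setting $v=0$ in $G_1$), and concludes that $S$ dominates $\hat T$, hence $[\hat T]\le[T]$; but this verification is only carried out on $G$, where it is tautologically the statement that $S$ dominates $T$, and no argument is given for elements $u+v\cdot w$ with $v\neq 0$. You instead replace $p$ by $\tilde p=[T]\circ p$, use $0\le[T]\le I_E$ to see that $[T]\in Orth(E)$ so that $\tilde p$ inherits the $f$-subalgebraic-linear property, and rerun the one-step construction of \thmref{basic theorem} with $\tilde p$; this yields an extension with $\lvert\hat T(x)\rvert\le[T](p(x))$ on all of $X$, whence $[\hat T]\le[T]$. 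Your caveat about reinterpreting $\hat T$ is in fact harmless: since $\tilde p\le p$, the bounds $s,r$ computed with $\tilde p$ lie between those computed with $p$, so your $\hat T$ is one of the admissible extensions produced by \thmref{basic theorem} with the original norm. What your approach buys is a rigorous proof of the inequality on the whole space, at the price of the additional (but short) orthomorphism argument; the paper's direct route is quicker in intention but, as written, does not establish the domination beyond $G$.
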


\begin{proof}
	Since $T$ is $E$-dominated operator, it is dominated. Indeed, Since $\lvert T(g)\rvert\leq p(g)$, we have $p(T(g))\leq p(p(g))$ (for example we can take a dominant $S=p$). Also, it follows from \cite[Theorem 4.1.2.]{Ku} that $T$ has the exact dominant $[T]$. Now, consider the $f$-subalgebra $G_1$ of $X$ in the proof of Theorem \ref{basic theorem}. For $v=0$ the addition unit and $u\in G$, we have
	$$
	\hat{T}(u)=T(u)\leq \lvert T(u)\rvert\leq S(p(u))
	$$
	and also
	$$
	-\hat{T}(u)=-T(u)\leq \lvert T(u)\rvert\leq S(p(u)).
	$$
	Therefore, we get $\lvert \hat{T}(u)\rvert\leq S(p(u))$ for each $u\in G$. Hence, $\hat{T}$ is also dominated by $S$, and so, we get $[\hat{T}]\leq [T]$. On the other hand, by considering the $maj(T)$ and $maj(\hat{T})$, we have $[T]\leq [\hat{T}]$. As a result, we get the desired result.
\end{proof}

\begin{cor}
	Let $Y$ be an unital and $e$-uniform complete $p$-closed $f$-subalgebra of $X$. If every non zero positive element has inverse in $Y$ then, for each $y_0\notin Y$, we have a map $F:X\to E$ such that $F(Y)=0$ and $F(y_0)>0$.
\end{cor}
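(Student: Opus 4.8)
The statement is a separation-type corollary of Theorem~\ref{basic theorem}, so the plan is to construct on a suitable one-step extension a functional that vanishes on $Y$, is strictly positive at $y_0$, and is $E$-dominated by some scalar multiple of $p$, and then invoke Theorem~\ref{basic theorem} to extend it to all of $X$. First I would fix $y_0\notin Y$ and, since $Y$ is $p$-closed, pass to $y_0^+$ or $y_0^-$ (WLOG a positive representative $w:=y_0\in X_+\setminus Y$, adjusting so that $p(w)\neq 0$), and form the $f$-subalgebra $G_1=\{u+v\cdot w:u,v\in Y\}$; by Lemma~\ref{f algebra subspace} this is an $f$-subalgebra of $X$, and by Proposition~\ref{f algebra order complete} it inherits $e$-uniform completeness from $Y$.

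The core step is to define $F_0:G_1\to E$ by $F_0(u+v\cdot w)=v\cdot c$ for a carefully chosen $c\in E_+$, i.e.\ $F_0=\hat{T}$ with $T\equiv 0$ on $Y$ and $z=c$ in the notation of the proof of Theorem~\ref{basic theorem}. With $T=0$ the quantities of that proof become $s=\sup\{-p(u-w):u\in Y\}$ and $r=\inf\{p(v+w):v\in Y\}$; I would choose $c=r=\inf\{p(v+w):v\in Y\}$. Two things must be checked: (i) $F_0$ is $E$-dominated by $p$ — this is exactly the computation carried out in the proof of Theorem~\ref{basic theorem} for the map $\hat{T}$ with $z=r$, using the $f$-subalgebraic-linear property of $p$, the existence of inverses $(v+\tfrac1n e)^{-1}$ in $Y_+$, Archimedeanness of $E$, and the band-projection argument to pass from $v\in Y_+$ to arbitrary $v\in Y$; and (ii) $F_0(y_0)=c>0$. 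For (ii) note $F_0(y_0)=F_0(0+e\cdot w)=e\cdot c=c=r$, and $r=\inf\{p(v+w):v\in Y\}\geq 0$; the point is to rule out $r=0$. This is where the hypothesis "every nonzero positive element has inverse in $Y$" is used: if $r=0$ then $p(v_\alpha+w)\xrightarrow{o}0$ for a net $(v_\alpha)$ in $Y$, so $-v_\alpha\xrightarrow{p} w$; since $Y$ is $p$-closed and $-v_\alpha\in Y$, this forces $w\in Y$, contradicting $w\notin Y$. (If one only gets $p(w)=0$ from some degenerate situation, that is excluded at the outset by the choice of representative.) Hence $c=r>0$ in $E_+$, and since $p(v\cdot c)=0$ would give $v\cdot c=0$... more directly $F_0(y_0)=r>0$.

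Finally, $F_0$ vanishes on $Y$ by construction (take $v=0$), it is linear, and it is $E$-dominated, so Theorem~\ref{basic theorem} applies with $G=G_1\supseteq Y$ (note $G_1$ is unital, containing $e\in Y$, and $e$-uniformly complete by Proposition~\ref{f algebra order complete}): there is an $E$-dominated $F:X\to E$ with $F=F_0$ on $G_1$, hence $F(Y)=0$ and $F(y_0)=r>0$. This $F$ is the required map.

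\medskip

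\noindent\textbf{Main obstacle.} The delicate point is step (ii): showing $c=r>0$ strictly, i.e.\ that the infimum $r=\inf\{p(v+w):v\in Y\}$ is not merely $\geq 0$ but genuinely positive in the vector lattice $E$ (equivalently, $r\neq 0$). This is precisely where the two extra hypotheses on $Y$ — that it is $p$-closed and that its nonzero positive elements are invertible — have to be combined; the invertibility is what previously powered the $E$-dominatedness estimate in Theorem~\ref{basic theorem}, while $p$-closedness is what converts "$r=0$" into "$w$ is a $p$-limit of elements of $-Y$, hence in $Y$", contradicting $w\notin Y$. Everything else is either a direct citation of Theorem~\ref{basic theorem} (for $E$-dominatedness of the one-step extension and the Zorn's-lemma passage to $X$) or routine verification that $F_0$ is well defined and linear, which again follows from the displayed inequality~$(6)$ in that proof exactly as there.
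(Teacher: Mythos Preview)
Your overall strategy matches the paper's: define $F_0(u+v\cdot y_0)=v\cdot r$ on the one-step extension $G_1$ with $r=\inf\{p(v+y_0):v\in Y\}$, show $r>0$ via $p$-closedness, check $E$-dominatedness, and extend by Theorem~\ref{basic theorem}. However, your argument for $r>0$ has a genuine gap. You write ``if $r=0$ then $p(v_\alpha+w)\xrightarrow{o}0$ for a net $(v_\alpha)$ in $Y$'', but $\inf A=0$ for a set $A\subseteq E_+$ does not by itself produce a net \emph{in $A$} that is order-convergent to $0$; one needs $A$ to be downward directed so that $A$ itself, with reverse order, furnishes the net. The paper spends most of its proof on exactly this point: using decomposability and a band projection $\pi_B$ onto $B=(a_1-a_1\vee a_2)^{\perp\perp}$ (lifted to $\pi'_B$ on $X$) to show that for $a_i=p(y_i+y_0)$ one has $a_1\wedge a_2=p\big(\pi'_B(y_1)+\pi'^d_B(y_2)+y_0\big)\in A$, so $A=\{p(y+y_0):y\in Y\}$ is closed under $\wedge$ and hence downward directed. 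Only then does $r=0$ yield $p(y_\alpha+y_0)\downarrow 0$ with $y_\alpha\in Y$, and $p$-closedness finishes the contradiction. Your proposal skips this entirely.

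A second, smaller point: you locate the hypothesis ``every nonzero positive element has inverse in $Y$'' at step (ii), but your actual argument there uses only $p$-closedness. In the paper this invertibility hypothesis is used not for $r>0$ but for the $E$-dominatedness check, via the one-line computation $p(u+v\cdot y_0)=v\cdot p(v^{-1}u+y_0)\geq v\cdot r$ for $v>0$. You instead import the full band-projection machinery from the proof of Theorem~\ref{basic theorem} (which relies only on $e$-uniform completeness for the inverses $(v+\tfrac1n e)^{-1}$), so in your route the extra invertibility hypothesis is never actually invoked. That is a legitimate alternative for $E$-dominatedness, but you should be aware that it makes your attribution of the hypothesis incorrect, and you should either drop the claim or point out that your version renders that hypothesis redundant.
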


\begin{proof}
	Let's take a set $Y_1=\{u+v\cdot y_0:u,v\in Y\}$ and $w=\inf\{p(y+ y_0):y\in Y\}\geq0$. Then we show $w\neq0$. Assume it is not hold true, i.e., $w=0$. For any $a_1,\ a_2\in A$, it is enough to show that $a_1\wedge a_2\in A$. For proving that, we consider \cite[Theorem 2.1.2]{Ku} and take a band $B=(a_1-a_\vee a_2)$. The there is a band projection $\pi_B:E\to B$. Then we have another projection $\pi'_B$ on $X$ such that $\pi_B\big(p(x)\big)=p\big(\pi'_B(x)\big)$. So, we have
	\begin{eqnarray*}
		\pi_B(a_1)+\pi^d_B(a_1)&=&\pi_B(a_1\vee a_2+a_1\wedge a_2-a_2)+\pi^d_B(a_1\vee a_2+a_1\wedge a_2-a_2)\\&=& \pi_B(a_1\wedge a_2)+\pi^d_B(a_1\wedge a_2)\\&=&a_1\wedge a_2.
	\end{eqnarray*}
	Now, take $y_1,y_2\in Y_+$ so that $a_1=p(y_1+y_0)$ and $a_2=p(y_2+y_0)$. Thus, we can get
	\begin{eqnarray*}
		a_1\wedge a_2=\pi_B(a_1)+\pi^d_B(a_1)&=&\pi_B\big(p(y_1+y_0)\big)+\pi^d_B\big(p(y_2+y_0)\big)\\&=&p\big(\pi'_B(y_1+y_0)\big)+p\big(\pi'^d_B(y_2+y_0)\big)\\&=& p\big(\pi'_B(y_1+y_0)+\pi'^d_B(y_2+y_0)\big)\\&=& p\big(\pi'_B(y_1+y_0)+\pi'^d_B(y_2+y_0)\big)\\&=& p\big(y_0+\pi'_B(y_1)+\pi'^d_B(y_2)\big)
	\end{eqnarray*}
	Therefore, we can see $a_1\wedge a_2\in A$. Thus, one can see $a_1\wedge a_2\leq a_1$ and $a_1\wedge a_2\leq a_2$. So, $A$ is downward directed set. Therefore, we can take $A$ as a net in $E$. Since $p(y_\alpha-y_0)=p(y_0-y_\alpha)\downarrow 0$, we have $y_\alpha\xrightarrow{p}y_0$. Thus, we get $y_0\in Y$ because $Y$ is $p$-closed set. Which is contradict with $y_0\notin Y$, and so, we have $w>0$. Next, we define a map $T:Y_1\to E$ by $f(u+v\cdot y_0)=v\cdot w$. Then $T$ is linear and $T(Y)=0$. Moreover, $T$ is also $E$-dominated. Indeed, we can write $p(u+v\cdot y_0)=v\cdot p(v^{-1}\cdot u+y_0)\geq v\cdot w=T(u+v\cdot y_0)$. It follows from the Theorem \ref{basic theorem} that there exists a map from $X$ to $E$ satisfying the desired result.
\end{proof}

For the next result, we consider the $f$-algebraic spaces $\mathcal{L}(E)\subseteq Orth(E)\subseteq Orth^\infty(E)$ in Example \ref{example of orh}.
\begin{cor}
Let $E$ be an order complete vector lattice. $\big(Orth(E),\lvert\cdot\rvert,Orth^\infty(E)\big)$ is an $LNFA$. Moreover, If $T:\mathcal{L}(E)\to Orth^\infty(E)$ an $E$-dominated operator then it has an extension to $Orth(E)$.
\end{cor}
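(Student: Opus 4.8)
The plan is to read off the statement as a direct specialization of Theorem~\ref{basic theorem}, so the work consists almost entirely of checking that the concrete triple $\big(Orth(E),\lvert\cdot\rvert,Orth^\infty(E)\big)$ meets the hypotheses there. For the first assertion I would argue as follows: by Example~\ref{example of orh}, $Orth(E)$ is an $f$-subalgebra of $Orth^\infty(E)$ and both are $f$-algebras; the map $\lvert\cdot\rvert:Orth(E)\to Orth^\infty(E)_+$ is a vector norm (it separates points, is absolutely homogeneous, and satisfies the triangle inequality) and is trivially monotone, so $\big(Orth(E),\lvert\cdot\rvert,Orth^\infty(E)\big)$ is an $LNFA$. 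To respect the standing convention that all $LNFA$'s carry the $f$-subalgebraic-linear property, I would observe that in a commutative $f$-algebra $\lvert a\cdot b\rvert=\lvert a\rvert\cdot\lvert b\rvert$, so for $T\in Orth(E)$ and a positive $S$ in the relevant $f$-subalgebra one has $\lvert T\cdot S\rvert=\lvert T\rvert\cdot\lvert S\rvert=S\cdot\lvert T\rvert$, which is exactly the form of $f$-subalgebraic-linearity used in the proof of Theorem~\ref{basic theorem}.

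Next I would assemble the data for Theorem~\ref{basic theorem} with value lattice (the ``$E$'' of that theorem) taken to be $Orth^\infty(E)$, ambient $f$-algebra $X:=Orth(E)$, and unital $f$-subalgebra $G:=\mathcal{L}(E)$. The points to be verified are: (i) $Orth^\infty(E)$ is order complete, which follows from order completeness of $E$ (see \cite{BGKKKM}, \cite{Za}); (ii) $X=Orth(E)$ is an $f$-subalgebra of $Orth^\infty(E)$, which is part of Example~\ref{example of orh}; (iii) $G=\mathcal{L}(E)$, the order ideal generated by $I_E$, is a unital $f$-subalgebra of $Orth(E)$ (again Example~\ref{example of orh}) with multiplicative unit $e=I_E$; and (iv) $G$ is $e$-uniformly complete, which I would deduce from the fact that $Orth(E)$ is uniformly complete for $E$ Archimedean together with the observation that the principal order ideal generated by $I_E$ inherits $I_E$-uniform completeness. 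With (i)--(iv) in hand the hypotheses of Theorem~\ref{basic theorem} are met.

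Finally, an $E$-dominated operator $T:\mathcal{L}(E)\to Orth^\infty(E)$ is by definition one satisfying $\lvert T(S)\rvert\leq p(S)=\lvert S\rvert$ for all $S\in\mathcal{L}(E)$, which is precisely the $E$-dominatedness hypothesis in Theorem~\ref{basic theorem}; applying that theorem produces an $E$-dominated operator $\hat T:Orth(E)\to Orth^\infty(E)$ with $\hat T|_{\mathcal{L}(E)}=T$, and this is the claimed extension. I expect the only genuine obstacle to be the verification of the structural facts rather than anything in the extension itself: specifically, pinning down that $Orth^\infty(E)$ is order complete when $E$ is, and that $\mathcal{L}(E)$ is truly $I_E$-uniformly complete. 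If the latter were to fail in full generality, the natural fix is to first replace $\mathcal{L}(E)$ by its $I_E$-uniform completion inside $Orth(E)$ (still a unital $f$-subalgebra) and extend $T$ there by $I_E$-uniform continuity before invoking Theorem~\ref{basic theorem}.
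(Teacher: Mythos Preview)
Your proposal is correct and follows essentially the same route as the paper: verify that $\big(Orth(E),\lvert\cdot\rvert,Orth^\infty(E)\big)$ is an $LNFA$ with the $f$-subalgebraic-linear property, check that $Orth^\infty(E)$ is order complete and that $\mathcal{L}(E)$ is a unital $e$-uniformly complete $f$-subalgebra, and then invoke Theorem~\ref{basic theorem}. The only cosmetic difference is your justification of $I_E$-uniform completeness of $\mathcal{L}(E)$ via uniform completeness of $Orth(E)$, whereas the paper first cites \cite{WE} to get order completeness of $\mathcal{L}(E)$ and then \cite[Theorem~42.6]{LZ} to pass to $e$-uniform completeness; both arguments work, and your contingency about replacing $\mathcal{L}(E)$ by its uniform completion is unnecessary here.
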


\begin{proof}
Since $E$ be an order complete vector lattice, we see that $Orth^\infty(E)$ is order complete $f$-algebra; see \cite[p.14]{BGKKKM}. Moreover, we can say that $\big(Orth(E),\lvert\cdot\rvert,Orth^\infty(E)\big)$ is an $LNFA$ because $Orth(E)$ is $f$-subalgebra of $Orth^\infty(E)$ and $\lvert\cdot\rvert$ has the $f$-subalgebraic-linear property.
	
By applying \cite[Theorem 3.1.]{WE}, we can see that $L(E)$ is order complete because $E$ is order complete. Moreover, by using \cite[Theorem 42.6.]{LZ}, we also get that $L(E)$ is $e$-uniform complete because $L(E)$ has unit $I_E$. Then, we have an $E$-dominated extension $T$ to $(Orth(E)$.
\end{proof}

\end{document}